\def\th@plain{%
  \upshape 
}
\renewenvironment{proof}[1][\proofname]{\par
  \pushQED{\qed}%
  \normalfont \topsep6\p@\@plus6\p@\relax
  \trivlist
  \item[\hskip\labelsep
        \bfseries
    #1\@addpunct{.}]\ignorespaces
}{%
  \popQED\endtrivlist\@endpefalse
}
\newtheorem{thm}{Theorem}[section]
\newtheorem{cor}[thm]{Corollary}
\newtheorem{lem}[thm]{Lemma}
\numberwithin{equation}{section}
\newcommand{\C}{$\mathcal {C}$}
\newcommand{\R}{$\mathcal {R}$}
\numberwithin{equation}{section}
\begin{document}
\title{($1,\lambda$)-embedded graphs and the acyclic edge choosability\footnotetext{Emails: sdu.zhang@yahoo.com.cn (X. Zhang), gzliu@sdu.edu.cn (G. Liu), jlwu@sdu.edu.cn (J.-L. Wu).}\thanks{This research is partially supported by Graduate Independent Innovation Foundation of Shandong University (No. yzc10040) and National Natural Science Foundation of China (No. 10971121, 11026184, 61070230).}}
\author{Xin Zhang\thanks{The first author is under the support from The Chinese Ministry of Education Prize for Academic Doctoral Fellows}, Guizhen Liu, Jian-Liang Wu\\[.5em]
{\small School of Mathematics, Shandong University, Jinan 250100, P. R. China}\\
}

\date{}
\maketitle

\begin{abstract}
A ($1,\lambda$)-embedded graph is a graph that can be embedded on a surface with Euler characteristic $\lambda$ so that each edge is crossed by at most one other edge. A graph $G$ is called $\alpha$-linear if there exists an integral constant $\beta$ such that $e(G')\leq \alpha v(G')+\beta$ for each $G'\subseteq G$. In this paper, it is shown that every ($1,\lambda$)-embedded graph $G$ is 4-linear for all possible $\lambda$, and is acyclicly edge-$(3\Delta(G)+70)$-choosable for $\lambda=1,2$.\\[.5em]
\textit{Keywords}: ($1,\lambda$)-embedded graph, $\alpha$-linear graph, acyclic edge choosability.\\[.5em]
\textit{MSC}: 05C10, 05C15.
\end{abstract}

\maketitle

\section{Introduction and basic definitions}

In this paper, all graphs considered are finite, simple and undirected. Let $G$ be a graph, we use $V(G)$, $E(G)$, $\delta(G)$ and $\Delta (G)$ to denote the vertex set, the edge set, the minimum degree and the maximum degree of a graph $G$. Let $e(G)=|E(G)|$ and $v(G)=|V(G)|$. Moreover, for embedded graph $G$ (i.e., a graph that can be embedded on a surface), by $F(G)$ we denote the face set of $G$. Let $f(G)=|F(G)|$. The girth $g(G)$
of a graph $G$ is the length of the shortest cycle of $G$. A $k$-, $k^+$- and $k^-$-vertex (or face) is a vertex (or face) of degree $k$, at least $k$ and at most $k$, respectively. A graph $G$ is called $\alpha$-linear if there exists an integral constant $\beta$ such that $e(G')\leq \alpha v(G')+\beta$ for each $G'\subseteq G$. Furthermore, if $\beta\geq 0$, then $G$ is said to be $\alpha$-nonnegative-linear; and if $\beta<0$, then $G$ is said to be $\alpha$-negative-linear. For other undefined concepts we refer the reader to \cite{Bondy}.

A mapping $c$ from $E(G)$ to the sets of colors $\{1,\cdots,k\}$ is called a proper edge-$k$-coloring of $G$ provided that any two adjacent edges receive different colors. A proper edge-$k$-coloring $c$ of $G$ is called an acyclic edge-$k$-coloring of $G$ if there are no bichromatic cycles in $G$ under the coloring $c$. The smallest number of colors such that $G$ has an acyclic edge coloring is called the \emph{acyclic edge chromatic number} of $G$, denoted by $\chi'_a(G)$. A graph is said to be acyclic edge-$f$-choosable, whenever we give a list $L_e$ of $f(e)$ colors to each edge $e\in E(G)$, there exists an acyclic edge-$k$-coloring of $G$, where each element is colored with a color from its own list. If $|L_e|=k$ for edge $e\in E(G)$, we say that $G$ is acyclicly edge-$k$-choosable. The minimum integer $k$ such that $G$ is acyclicly edge-$k$-choosable is called the \emph{acyclic edge choice number} of $G$, denoted by $\chi'_{c}(G)$.

Acyclic coloring problem introduced in \cite{Grunbam} has been extensively studied in many papers. One of the famous conjectures on the acyclic chromatic index is due to Alon, Sudakov and Zaks \cite{ASZ}. They conjectured that $\chi'_a(G)\leq \Delta(G)+2$ for any graph $G$. Alon et al.\cite{Alon} proved that $\chi'_a(G)\leq
64\Delta(G)$ for any graph $G$ by using probabilistic arguments. This bound for arbitrary graph was later improved to $16\Delta(G)$ by Molloy and Reed \cite{Molloy} and recently improved to $9.62\Delta(G)$ by Ndreca et al.\cite{Ndreca}.
In 2008, Fiedorowicz et al.\cite{Fiedorowicz} proved that $\chi'_a(G)\leq 2\Delta(G)+29$ for
each planar graph $G$ by applying a combinatorial method. Nowadays, acyclic coloring problem has attracted more and more attention since Coleman et al.\cite{Col.1, Col.2} identified acyclic coloring as the model for computing a Hessian matrix via a substitution method. Thus to consider the acyclic coloring problems on some other special classes of graphs seems interesting.

A graph is called $1$-planar if it can be drawn on the plane so that each edge is crossed by at most one other edge.
The notion of $1$-planar-graph was introduced by Ringel\cite{Ringel} while trying to simultaneously color the vertices and faces of a planar graph such that any pair of adjacent/incident elements receive different colors. In fact, from a planar graph $G$, we can construct a 1-planar graph $G'$
with its vertex set being $V(G)\cup F(G)$, and any two vertices of $G'$ being adjacent if and only if their corresponding elements in $G$ are adjacent or incident. Now we generalize this concept to ($1,\lambda$)-embedded graph, namely, a graph that can be embedded on a surface $S$ with Euler characteristic $\lambda$ so that each edge is crossed by at most one other edge. Actually, a ($1,2$)-embedded graph is a 1-planar graph. It is shown in many papers such as \cite{Fabrici} that $e(G)\leq 4v(G)-8$ for every 1-planar graph $G$. Whereafter, to determine whether the number of edges in the class of ($1,\lambda$)-embedded graphs is linear or not linear in the number of vertices for every $\lambda\leq 2$ might be interesting.

In this paper, we first investigate some structures of ($1,\lambda$)-embedded graph $G$ in Section 2 and then give a relationship among the three parameters $e(G)$, $v(G)$ and $g(G)$ of $G$, which implies that every ($1,\lambda$)-embedded graph is 4-linear for any $\lambda\leq 2$. In Section 3, we will introduce a linear upper bound for the acyclic edge choice number of the classes ($1,\lambda$)-embedded graphs with special given $\lambda$.

\section{The linearity of ($1,\lambda$)-embedded graphs}

Given a "good" graph $G$ (i.e., one for which all intersecting edges intersect in a single point and arise from four distinct vertices), the crossing number, denoted by $cr(G)$, is the minimum possible number of crossings with which the graph can be drawn.


Let $G$ be a ($1,\lambda$)-embedded graph. In the following we always assume that $G$ has been embedded on a surface with Euler characteristic $\lambda$ so that each edge is crossed by at most one other edge and the number of crossings of $G$ in this embedding is minimum. Thus, $G$ has exactly $cr(G)$ crossings.
Sometimes we say such an embedding proper for convenience.

\begin{thm}\label{cr}
Let $G$ be a ($1,\lambda$)-embedded graph. Then $cr(G)\leq v(G)-\lambda$.
\end{thm}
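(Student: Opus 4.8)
The plan is to extract from the ($1,\lambda$)-embedded drawing of $G$ a planar-drawing comparison that converts each crossing into a contribution to the Euler-formula deficit. Consider a proper embedding of $G$ on the surface $S$ of Euler characteristic $\lambda$, and let $C$ be the set of crossings, so $|C| = cr(G)$. Form the \emph{planarization} $G^\times$ by placing a new vertex at each crossing point: $G^\times$ is genuinely embedded (crossing-free) on $S$, with $v(G^\times) = v(G) + cr(G)$ and $e(G^\times) = e(G) + 2cr(G)$, since each crossing splits two edges into four. Euler's formula on $S$ gives $v(G^\times) - e(G^\times) + f(G^\times) = \lambda$, hence $f(G^\times) = \lambda + e(G) + cr(G) - v(G)$.

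Next I would bound $f(G^\times)$ from below — or rather bound the face-edge incidences — to force $cr(G)$ down. The key combinatorial input is that in a proper $1$-embedding, the two edges of a crossing arise from four distinct vertices (the "good" hypothesis), so each crossing vertex in $G^\times$ has degree $4$ and its four incident "half-edges" lead to four distinct original vertices. The crucial claim to establish is that in $G^\times$ there are "enough" independent crossing-vertices or that each crossing can be charged to a face or vertex of $G$ in an injective way. Concretely, I expect the argument to show that one can associate to each crossing a distinct vertex of $G$ (for instance, one of the four endpoints chosen by a local rule that avoids collisions), which immediately yields $cr(G) \le v(G)$; the refinement to $v(G) - \lambda$ then comes from feeding this back through the face count $f(G^\times) \ge 1$ together with a discharging/counting step on the crossing structure, or alternatively from the minimality of the embedding.

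Alternatively, and perhaps more cleanly, I would argue by contradiction using minimality of $cr(G)$: if $cr(G) > v(G) - \lambda$, then $f(G^\times) > 1$, and I would try to locate a crossing configuration whose local structure can be re-routed to remove a crossing while preserving the $1$-embedded property, contradicting minimality. The re-routing lemma would exploit that a crossing pair $\{ab, cd\}$ sits inside a small region bounded by segments of $G^\times$; if that region is "cheap" (few original vertices on its boundary), one of the two edges can be pushed along the boundary to eliminate the crossing. Establishing that such a cheap region always exists when crossings are too numerous — essentially an averaging argument over the faces of $G^\times$ — is what makes the bound $v(G)-\lambda$ rather than something weaker.

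The main obstacle I anticipate is the re-routing / charging step: showing that crossings cannot be "too dense" requires controlling how the four edge-segments at a crossing interact with nearby crossings and vertices, and the $1$-embedded restriction (each edge crossed at most once) has to be invoked carefully so that re-routing one edge does not create a new crossing on an edge that is already crossed. Handling crossings whose four endpoints coincide with endpoints of other crossings — i.e.\ clustered crossing configurations — is the delicate case, and I would expect the proof to isolate a minimal such cluster and analyze it directly. Once the density bound is in hand, the substitution into $f(G^\times) = \lambda + e(G) + cr(G) - v(G)$ with $f(G^\times)\ge 1$ (and the fact that a graph with at least one edge on a surface has at least one face) closes the argument.
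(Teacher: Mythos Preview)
Your proposal sets up the planarization $G^\times$ correctly but then never closes the argument: you offer two alternatives (an injective charging of crossings to vertices, and a re-routing contradiction), and in both cases you explicitly flag the crucial step as something you merely ``expect'' or ``would try'' to establish. The charging idea, even if it worked, would only give $cr(G)\le v(G)$, and you yourself note that getting the extra $-\lambda$ would require ``feeding this back'' through face counts in an unspecified way. The re-routing idea is even vaguer, and the obstacle you identify (that moving one edge may create a new crossing on an already-crossed edge) is real and is not addressed. So as it stands there is a genuine gap: no step in your outline actually produces the inequality.

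The paper's proof avoids all of this by \emph{not} planarizing via new vertices. Instead, for each crossing pair $ab,cd$ it adds the four ``frame'' edges $ac,cb,bd,da$ (if absent) and deletes one of $ab,cd$. The resulting graph $G^*$ is crossing-free on $S$ and has the \emph{same} vertex set as $G$. Euler's formula plus the fact that every face has degree at least $3$ gives $f(G^*)\le 2v(G^*)-2\lambda = 2v(G)-2\lambda$. The point of the frame is that each surviving crossing point now sits on the common boundary of two faces of $G^*$, and the frame guarantees that no face of $G^*$ sees more than one crossing; hence $2\,cr(G)\le f(G^*)\le 2v(G)-2\lambda$, which is exactly the claim. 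The key idea you are missing is this frame construction, which converts crossings into face pairs while keeping $v$ fixed; your $G^\times$ inflates the vertex count by $cr(G)$ and thereby loses the leverage needed for the bound.
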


\begin{proof}
Suppose $G$ has been properly embedded on a surface with Euler characteristic $\lambda$. Then for each pair of edges $ab,cd$ that cross each other at a crossing point $s$, their end vertices are pairwise distinct. For each such pair, we add new edges $ac,cb,bd,da$ (if it does not exist originally) to close $s$, then arbitrarily delete one edge $ab$ or $cd$ from $G$. Denote the resulting graph by $G^*$ and then we have $cr(G^*)=0$. By Euler's formula $v(G^*)-e(G^*)+f(G^*)=\lambda$ and the well-known relation $\sum_{v\in V(G^*)}d_{V(G^*)}(v)=\sum_{f\in F(G^*)}d_{V(G^*)}(f)=2e(G^*)$, $f(G^*)\leq 2v(G^*)-2\lambda$. Since each crossing point $s$ (note that $s$ is not a real vertex in $G$) lies on a common boundary of two faces of $G^*$ and each face of $G^*$ is incident with at most one crossing point (recall the definition of $G^*$), we deduce that $2cr(G)\leq f(G^*)$. Since $v(G)=v(G^*)$, we have $cr(G)\leq
\frac{f(G^*)}{2}\leq v(G^*)-\lambda=v(G)-\lambda$ in final.
\end{proof}

\begin{thm} \label{edge}
Let $G$ be a ($1,\lambda$)-embedded graph with girth at least $g$. Then $e(G)\leq \frac{2g-2}{g-2}(v(G)-\lambda)$.
\end{thm}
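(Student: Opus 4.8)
The plan is to bootstrap from Theorem~\ref{cr} together with the classical girth bound coming from Euler's formula, but to apply that bound to an auxiliary \emph{crossing-free} subgraph rather than to $G$ itself. The naive move --- planarizing $G$ by turning every crossing into a vertex of degree $4$ --- is of no use here, since that operation can create triangular faces (for instance a crossing vertex $s$ of two edges $ab$ and $cd$ together with an edge $ac$ of $G$), so the planarization may have girth $3$ no matter how large $g$ is. Instead one should simply discard a single edge at each crossing, which is cheap because there are only $cr(G)\le v(G)-\lambda$ crossings.

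Concretely, first fix a proper embedding of $G$ on a surface of Euler characteristic $\lambda$ realizing exactly $cr(G)$ crossings. Since each edge is crossed by at most one other edge and the drawing is good, the edges taking part in crossings are pairwise distinct and fall into $cr(G)$ crossing pairs; delete one edge from each pair and call the resulting spanning subgraph $H$. The given embedding restricts to an embedding of $H$ with no crossings at all, and
\[
v(H)=v(G),\qquad e(H)=e(G)-cr(G),
\]
while $H\subseteq G$ forces $g(H)\ge g$.

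Second, I would invoke the standard fact that a simple graph $H$ of girth at least $g$ drawn without crossings on a surface of Euler characteristic $\lambda$ satisfies $e(H)\le \tfrac{g}{g-2}\,(v(H)-\lambda)$; this follows from Euler's inequality $v(H)-e(H)+f(H)\ge\lambda$ and the face-counting estimate $g\,f(H)\le\sum_{\phi\in F(H)}\ell(\phi)=2e(H)$ upon eliminating $f(H)$. Plugging in $e(H)=e(G)-cr(G)$ and then using $cr(G)\le v(G)-\lambda$ from Theorem~\ref{cr} yields
\[
e(G)\le \frac{g}{g-2}\,(v(G)-\lambda)+cr(G)\le \Big(\frac{g}{g-2}+1\Big)(v(G)-\lambda)=\frac{2g-2}{g-2}\,(v(G)-\lambda),
\]
which is exactly the claimed bound.

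The one delicate step is the face-counting inequality $g\,f(H)\le 2e(H)$: it requires every facial walk of $H$ to have length at least $g$, which can fail when $H$ has bridges or is otherwise degenerate (a single edge, a forest, or very few vertices), and for such tiny configurations the stated bound can even be false. I expect this bookkeeping to be the only real point needing care, and I would dispose of it in the routine way --- reducing to the case where $H$ (equivalently $G$) is $2$-connected and contains a cycle, and checking the finitely many small exceptions directly --- rather than through any new idea.
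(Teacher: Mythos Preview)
Your proposal is correct and follows essentially the same route as the paper: delete one edge from each crossing pair to obtain a crossing-free subgraph $H$ with $e(H)=e(G)-cr(G)$ and girth at least $g$, apply Euler's formula together with the face-length bound $g\cdot f(H)\le 2e(H)$, and then absorb the $cr(G)$ term via Theorem~\ref{cr}. Your discussion of why the alternative planarization-by-crossing-vertices fails is a nice addition, and you are in fact more scrupulous than the paper about the degenerate cases where $g\cdot f(H)\le 2e(H)$ can break down; the paper simply asserts this inequality without comment.
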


\begin{proof}
Suppose $G$ has been properly embedded on a surface with Euler characteristic $\lambda$. Now for each pair of edges $ab,cd$ that cross each other, we arbitrarily delete one from $G$. Let $G'$ be the resulting graph. One can easily see that $cr(G')=0$. By Euler's formula $v(G')-e(G')+f(G')=\lambda$ and the relations $v(G')=v(G)$, $e(G')=e(G)-cr(G)$, we have
\begin{equation} \label{1}
v(G)-e(G)+f(G')=v(G')-e(G')+f(G')-cr(G)=\lambda-cr(G)
\end{equation}
and
\begin{equation} \label{2}
\sum_{f\in F(G')}d_{G'}(f)=2e(G')=2(e(G)-cr(G))\geq g(G')f(G')\geq g(G)f(G')\geq g\cdot f(G').
\end{equation}
Now combine equations (\ref{1}) and (\ref{2}) together, we immediately have $e(G)\leq \frac{g}{g-2}(v(G)-\lambda)+cr(G)\leq
\frac{2g-2}{g-2}(v(G)-\lambda)$ by Theorem \ref{cr}.
\end{proof}

By Theorem \ref{edge}, the following two corollaries are natural.

\begin{cor}
Every ($1,\lambda$)-embedded graph is $4$-linear for any $\lambda\leq 2$.
\end{cor}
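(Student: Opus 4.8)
The plan is to obtain this as an immediate consequence of Theorem \ref{edge}. First I would note that the class of ($1,\lambda$)-embedded graphs is closed under taking subgraphs: restricting a proper embedding of $G$ on a surface of Euler characteristic $\lambda$ to any $G'\subseteq G$ only deletes arcs and crossings, so $G'$ is again embedded on that surface with every edge crossed at most once. Hence it is enough to exhibit a single integer $\beta$ (allowed to depend on $\lambda$) with $e(G')\le 4v(G')+\beta$ for every $G'\subseteq G$.

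I would then split on whether $G'$ has a cycle. If $G'$ is acyclic, then trivially $e(G')\le v(G')-1\le 4v(G')$. If $G'$ contains a cycle, then $g(G')\ge 3$, so $G'$ has girth at least $3$, and Theorem \ref{edge} with $g=3$ yields $e(G')\le \frac{2\cdot 3-2}{3-2}\bigl(v(G')-\lambda\bigr)=4v(G')-4\lambda$. Taking $\beta:=\max\{0,-4\lambda\}$ --- a well-defined nonnegative integer since $\lambda$ is an integer --- covers both cases, so $G$ is $4$-linear, and in fact $4$-nonnegative-linear.

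There is no genuinely hard step here; the corollary is essentially a restatement of the $g=3$ instance of Theorem \ref{edge}. The only points needing a little attention are: (i) the subgraph-closedness noted above, which lets $\beta$ be fixed once and for all; (ii) the separate, trivial treatment of acyclic $G'$, since for a forest with few edges the bound of Theorem \ref{edge} becomes vacuous; and (iii) absorbing $-4\lambda$, which is positive when $\lambda\le 0$, into $\beta$. One could instead keep $g=g(G')$ and use $\frac{2g-2}{g-2}=2+\frac{2}{g-2}\le 4$ for $g\ge 3$ together with $v(G')-\lambda\ge 3-\lambda>0$, arriving at the same inequality.
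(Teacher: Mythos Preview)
Your proposal is correct and follows the same route as the paper, which simply records the corollary as ``natural'' from Theorem~\ref{edge} (the case $g=3$). You are just more explicit about the details the paper leaves implicit---subgraph-closedness of the class, the trivial acyclic case where girth is undefined, and the choice of $\beta=\max\{0,-4\lambda\}$ to accommodate both signs of $\lambda$---none of which changes the underlying argument.
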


\begin{cor}
Every triangle-free ($1,\lambda$)-embedded graph is $3$-linear for any $\lambda\leq 2$.
\end{cor}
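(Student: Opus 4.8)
The plan is to deduce this immediately from Theorem~\ref{edge}, exactly as the preceding corollary is obtained, but exploiting the better girth bound forced by triangle-freeness. First I would record two heredity facts. Being a $(1,\lambda)$-embedded graph is hereditary: if $G$ is $(1,\lambda)$-embedded and $G'\subseteq G$, then the restriction of a proper embedding of $G$ to the vertices and edges of $G'$ embeds $G'$ on the same surface with each edge still crossed at most once, since deleting edges or vertices can only remove crossings. Triangle-freeness is trivially hereditary as well. Hence every $G'\subseteq G$ is again a triangle-free $(1,\lambda)$-embedded graph, and it suffices to bound $e(G')$ in terms of $v(G')$ by a fixed affine function.

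Next I would distinguish two cases. If $G'$ is acyclic, then $e(G')\le v(G')-1\le 3v(G')$. If $G'$ contains a cycle, then, being triangle-free, it has girth at least $4$, so Theorem~\ref{edge} applies with $g=4$ and gives
\[
e(G')\le \frac{2\cdot 4-2}{4-2}\bigl(v(G')-\lambda\bigr)=3\bigl(v(G')-\lambda\bigr)=3v(G')-3\lambda .
\]
Here one uses that the coefficient $\frac{2g-2}{g-2}=2+\frac{2}{g-2}$ is decreasing in $g$, so it is legitimate to substitute the lower bound $g=4$ even when $g(G')$ is larger.

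Finally, taking $\beta=\max\{0,\,-3\lambda\}$, which is a fixed integer once $\lambda\le 2$ is fixed, both cases yield $e(G')\le 3v(G')+\beta$ for every $G'\subseteq G$; this is precisely the definition of $G$ being $3$-linear. The argument presents no genuine difficulty; the only points deserving a sentence of care are the heredity of $(1,\lambda)$-embeddability, handled by restricting the embedding, and the acyclic case, where the girth is undefined so Theorem~\ref{edge} cannot be quoted directly.
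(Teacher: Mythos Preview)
Your argument is correct and follows exactly the route the paper intends: the paper simply states that this corollary is ``natural'' from Theorem~\ref{edge}, and you have supplied the details---heredity of $(1,\lambda)$-embeddability and triangle-freeness, the computation $\frac{2g-2}{g-2}=3$ at $g=4$, and the separate treatment of acyclic subgraphs---that the paper leaves implicit. One trivial quibble: the bound $e(G')\le v(G')-1$ fails for the empty subgraph, but since your final $\beta$ is nonnegative the conclusion $e(G')\le 3v(G')+\beta$ still holds there.
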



\section{Acyclic edge choosability of ($1,\lambda$)-embedded graphs}

In this section we mainly investigate the acyclic edge choosability of ($1,\lambda$)-embedded graphs with special given $\lambda$. In \cite{Fiedorowicz}, Fiedorowicz et al. proved the following two results.

\begin{thm} If $G$ is a graph such that $e(G')\leq
2v(G')-1$ for each $G'\subseteq G$, then $\chi'_a(G)\leq
\Delta(G)+6$.
\end{thm}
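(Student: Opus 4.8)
The plan is to argue by contradiction, taking $G$ to be a counterexample with the fewest edges (among all graphs satisfying the hypothesis and violating the conclusion). The sparsity hypothesis is hereditary, so every subgraph $H\subseteq G$ again satisfies $e(H)\le 2v(H)-1$; in particular $\mad(G)<4$, and every \emph{proper} subgraph $H$ of $G$ has fewer edges, whence $\chi'_a(H)\le\Delta(H)+6\le\Delta(G)+6$. Thus every proper subgraph of $G$ admits an acyclic edge coloring from a fixed palette of $\Delta(G)+6$ colors. I would first record the trivial reduction: if some vertex had degree $1$, its pendant edge lies on no cycle, so after acyclically coloring $G$ minus that vertex I could give the pendant edge any color avoiding the at most $\Delta(G)-1$ edges at its other endpoint, leaving at least $7$ legal choices --- a contradiction. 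Hence $\delta(G)\ge2$, while $\mad(G)<4$ forces $\delta(G)\le3$; so $G$ must contain $2$-vertices and/or $3$-vertices, and the whole game is to show these cannot sit in the configurations a minimal counterexample is forced to contain.

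The technical core is a list of reducible configurations, each established by the standard delete--recolor--extend method together with control of maximal bichromatic paths. For a $2$-vertex $v$ with neighbors $u,w$, I would delete $v$, color $G-v$ by minimality, and try to color $uv$ and $wv$. Properness forbids at most $d(u)-1$ colors on $uv$ and at most $d(w)-1$ on $wv$, and the two colors must differ; the only way a bichromatic cycle can appear is through $v$, namely as a $\{c(uv),c(wv)\}$-alternating path from $u$ to $w$ in $G-v$. The key observation is that, once the color $\beta$ of $wv$ is fixed, each dangerous color for $uv$ is the color of a distinct edge at $w$ that closes such an alternating path, so the number of forbidden pairs is controlled by $\min\{d(u),d(w)\}$; counting this against the $(\Delta(G)+6)$-palette shows the extension succeeds whenever $d(u)+d(w)$ is not too large. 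This yields reducibility statements of the form ``no $2$-vertex has neighbors of small total degree,'' and analogous but longer arguments (with more bichromatic-path cases) give the corresponding reducible configurations around $3$-vertices. I would carry these out in increasing order of the degrees involved, reusing earlier configurations to prune the case analysis.

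With the reducible list in hand, I would finish by discharging. Assign to each vertex the initial charge $\mu(v)=d(v)-4$, so that the total charge equals $\sum_{v\in V(G)}(d(v)-4)=2e(G)-4v(G)\le -2<0$. I would then design rules by which each $5^{+}$-vertex donates prescribed amounts of charge to the adjacent $2$- and $3$-vertices it is forced (by the absence of the reducible configurations) to be incident with. The reducibility statements are exactly what guarantee that every $2$- and $3$-vertex receives enough charge to become nonnegative, while every $4^{+}$-vertex retains nonnegative charge; hence the total final charge is at least $0$, contradicting that it equals the negative total initial charge. This contradiction shows that no minimal counterexample exists and proves $\chi'_a(G)\le\Delta(G)+6$.

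I expect the main obstacle to be the bichromatic-path bookkeeping in the $2$- and $3$-vertex reductions. One must argue that the maximal bichromatic paths responsible for potential cycles are genuinely few, and, where the naive count is too weak, either recolor along a suitable maximal path to free a color or exploit the freedom of choosing both new colors simultaneously rather than one at a time. Making these counts tight is precisely what produces the additive constant $6$, and then matching the resulting reducible configurations to a consistent discharging scheme --- so that no low-degree vertex is left under-charged --- is the delicate part of the write-up.
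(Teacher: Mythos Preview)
The paper does not prove this statement: it is quoted from Fiedorowicz, Ha\l{}uszczak and Narayanan \cite{Fiedorowicz} and stated without proof, so there is no argument in the paper to compare your proposal against.

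That said, your plan is exactly the standard template, and it is also the template the present paper applies to its own Theorem~\ref{4-linear} (the $4v-1$ analogue): take an edge-minimal counterexample, use the hereditary sparsity to get every proper subgraph acyclically edge-colored with $\Delta(G)+6$ colors, establish a short list of reducible configurations around low-degree vertices by delete--color--extend arguments with bichromatic-path counting, and then run a discharging argument with initial charge $d(v)-4$ (the paper uses $d(v)-8$ for the $4v-1$ case in Lemma~\ref{configurations}). So your outline is the right shape and is consistent with how the original source proceeds; what you have written, however, is a sketch rather than a proof --- the actual reducible configurations and the discharging rules that pin down the constant $6$ are precisely the content you have deferred, and nothing can be checked until those are specified.
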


\begin{thm} If $G$ is a graph such that $e(G')\leq
3v(G')-1$ for each $G'\subseteq G$, then $\chi'_a(G)\leq
2\Delta(G)+29$.
\end{thm}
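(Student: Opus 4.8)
\medskip
\noindent\textbf{A proof proposal.} This is a statement of the ``bounded density $\Rightarrow$ bounded acyclic chromatic index'' type, so the plan is the standard scheme: a minimal counterexample, a list of reducible configurations, and a discharging argument forcing one of them to occur. Suppose the theorem fails, and let $G$ be a counterexample with $e(G)$ as small as possible; put $\Delta=\Delta(G)$ and $k=2\Delta+29$, so $\chi'_a(G)>k$. The hypothesis $e(G')\le 3v(G')-1$ is inherited by every subgraph, hence every proper subgraph $H\subsetneq G$ inherits the hypothesis and, being smaller than $G$, satisfies $\chi'_a(H)\le 2\Delta(H)+29\le k$ and so admits an acyclic edge $k$-colouring. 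Note also that ``$e(G')\le 3v(G')-1$ for all $G'\subseteq G$'' is equivalent to $\mad(G)<6$; this is the only use of the hypothesis, and it is what will drive the discharging.

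The recurring tool is an extension computation. If $uv\in E(G)$ and $\varphi$ is an acyclic edge $k$-colouring of $G-uv$, call a colour $c$ \emph{forbidden} for $uv$ if it already appears on an edge at $u$ or $v$, or if giving $uv$ the colour $c$ closes a bichromatic cycle. Forbidden colours of the first kind number at most $(d(u)-1)+(d(v)-1)$. For the second kind, a bichromatic $(c,c')$-cycle through $uv$ forces $c'$ to appear at both $u$ and $v$, so only the $\le\min\{d(u),d(v)\}-1$ colours common to $u$ and $v$ are relevant; for each such $c'$ the $(c,c')$-alternating path leaving $u$ is uniquely determined, and bounding how many colours $c$ send that path back to $v$ finishes the estimate. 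In particular, if $\min\{d(u),d(v)\}\le 2$ then at most $2\Delta-1<k$ colours are forbidden, so $uv$ can be recoloured; hence $\delta(G)\ge 3$. For the rest one compiles a finite list of local configurations built from $5^{-}$-vertices --- short paths and cycles of $5^{-}$-vertices, $5^{-}$-vertices with prescribed neighbour degrees, and similar patterns --- and rules each out of $G$ by deleting one or two well-chosen edges, recolouring the remainder by minimality, and reinserting the deleted edges, performing a Kempe-type exchange along a bichromatic chain when no colour is immediately free (and checking that no new bichromatic cycle is created). The bounded number of edges recoloured and chains switched in these reductions is what the additive term $29$ absorbs.

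The last step is the discharging. Assign each vertex $v$ the charge $\mu(v)=d(v)-6$, so that
\[
\sum_{v\in V(G)}\mu(v)=2e(G)-6v(G)\le 2\bigl(3v(G)-1\bigr)-6v(G)=-2<0 .
\]
I would then choose discharging rules --- roughly, each $5^{-}$-vertex pulls a fixed amount of charge from each of its higher-degree neighbours --- with the transfer amounts tuned so that, if $G$ avoided every configuration from the list, every vertex would end with nonnegative charge, contradicting the strictly negative total. Combined with the previous step this contradicts the existence of $G$, and the theorem follows.

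The crux is the mutual calibration of the last two steps: the list of configurations must be rich enough that $\mad(G)<6$ alone forces one of them (devising the discharging rules that certify exactly this is the main combinatorial effort), yet every configuration must be reducible using only $2\Delta+O(1)$ colours. For those configurations whose deleted edge joins two vertices of degree close to $\Delta$ a naive colour count fails, and one genuinely needs the bichromatic-chain exchange; verifying that such an exchange introduces no new bichromatic cycle and terminates is the most delicate bookkeeping in the argument, and I expect the precise value of the constant to emerge from this case analysis.
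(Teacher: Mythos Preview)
The paper does not prove this theorem: it is quoted verbatim as a result of Fiedorowicz, Ha{\l}uszczak and Narayanan \cite{Fiedorowicz}, and no argument for it appears anywhere in the text. So there is no ``paper's own proof'' to compare your proposal against.

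That said, your outline is the right template and matches both the original Fiedorowicz et al.\ argument and the method this paper itself uses for its own Theorem~\ref{4-linear} (the $4$-negative-linear analogue): minimal counterexample, a structural lemma listing unavoidable configurations proved by discharging with initial charge $d(v)-2\alpha$, and then reducing each configuration by deleting an edge, colouring by minimality, and counting forbidden colours. What you have written, however, is a plan rather than a proof. The two places where you explicitly defer the work --- ``one compiles a finite list of local configurations'' and ``I would then choose discharging rules \ldots\ with the transfer amounts tuned'' --- are exactly where the content lives, and the constant $29$ is not justified until those lists and rules are written down and the arithmetic checked. Your remark that Kempe-type bichromatic-chain exchanges are needed for the hard cases is also more than the actual proofs require: in the Fiedorowicz et al.\ argument (and in this paper's Lemma~\ref{configurations}/Theorem~\ref{4-linear} analogue) the configurations are chosen so that a straight count of forbidden colours, sometimes after recolouring a single auxiliary edge, suffices; no alternating-chain switches appear. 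So the scheme is correct, but as it stands it is an outline, not a proof.
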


In fact, these two theorems respectively imply that the acyclic edge chromatic number of $2$-negative-linear graph $G$ is at most $\Delta(G)+6$ and that the acyclic edge chromatic number of $3$-negative-linear graph $G$ is at most $2\Delta(G)+29$.

Note that every triangle-free ($1,\lambda$)-embedded graph is $3$-negative-linear for any $1\leq \lambda\leq 2$ by Theorem \ref{edge}. Hence the following corollary is trivial.

\begin{cor}
Let $G$ be a triangle-free ($1,\lambda$)-embedded graph with $1\leq \lambda\leq 2$. Then $\chi'_a(G)\leq 2\Delta(G)+29$.
\end{cor}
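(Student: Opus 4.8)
The plan is to reduce the statement directly to the second theorem of Fiedorowicz et al.\ quoted above, namely that any graph $G$ with $e(G')\leq 3v(G')-1$ for every $G'\subseteq G$ satisfies $\chi'_a(G)\leq 2\Delta(G)+29$. So it suffices to verify this edge bound for every subgraph of a triangle-free ($1,\lambda$)-embedded graph with $1\leq\lambda\leq 2$.

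First I would observe that the two hypotheses in play are both hereditary. If $G$ is ($1,\lambda$)-embedded, fix a proper embedding on a surface of Euler characteristic $\lambda$; restricting this embedding to any $G'\subseteq G$ still shows $G'$ is ($1,\lambda$)-embedded, since deleting vertices and edges cannot create new crossings. Likewise, triangle-freeness is inherited by subgraphs. Hence every $G'\subseteq G$ is again a triangle-free ($1,\lambda$)-embedded graph, and in particular has girth at least $4$.

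Next I would apply Theorem \ref{edge} to each such $G'$ with $g=4$, obtaining
\[
e(G')\leq \frac{2\cdot 4-2}{4-2}\bigl(v(G')-\lambda\bigr)=3\bigl(v(G')-\lambda\bigr)=3v(G')-3\lambda .
\]
Since $1\leq\lambda\leq 2$, we have $3\lambda\geq 3>1$, so $e(G')\leq 3v(G')-3\lambda\leq 3v(G')-1$ for every $G'\subseteq G$. The hypothesis of the second Fiedorowicz et al.\ theorem is therefore met, and it yields $\chi'_a(G)\leq 2\Delta(G)+29$, completing the argument.

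There is essentially no serious obstacle here: the only point that needs a line of care is the passage to subgraphs (ensuring the edge inequality holds hereditarily, not just for $G$ itself), and this is immediate once one notes that restricting a proper embedding to a subgraph is again a proper embedding on the same surface. Everything else is the arithmetic substitution $g=4$ into Theorem \ref{edge} together with the bound $3\lambda\geq 3$.
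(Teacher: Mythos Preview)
Your proposal is correct and matches the paper's argument exactly: the paper simply remarks that every triangle-free $(1,\lambda)$-embedded graph with $1\leq\lambda\leq2$ is $3$-negative-linear by Theorem~\ref{edge}, and then invokes the Fiedorowicz et al.\ result. You have just spelled out the heredity step and the substitution $g=4$ that the paper leaves implicit.
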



The following main theorem in this section is dedicated to giving a linear upper bound for the acyclic edge choice number of $4$-negative-linear graphs.

\begin{thm}\label{4-linear}
If $G$ is a graph such that $e(G')\leq 4v(G')-1$ for each $G'\subseteq G$, then $\chi'_{c}(G)\leq 3\Delta(G)+70$.
\end{thm}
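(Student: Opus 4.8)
The plan is to follow the now-standard ``reducible configurations plus discharging'' strategy for acyclic edge colouring of sparse graphs, in the spirit of Fiedorowicz--Ha\l uszczak--Horňák and subsequent papers, but with the arithmetic tuned to the hypothesis $e(G')\le 4v(G')-1$. First I would argue by contradiction: let $G$ be a counterexample minimizing $e(G)+v(G)$, so every proper subgraph admits the desired acyclic edge colouring from any lists of size $3\Delta(G)+70$. Minimality immediately forces $\delta(G)\ge 2$ (an edge at a vertex of degree $\le 1$ extends trivially), and in fact one gets sharper local constraints: for instance $G$ contains no two adjacent vertices both of low degree, no long paths through $2$-vertices, no low-degree vertex with too many low-degree neighbours, etc. The precise list of forbidden configurations is what the bulk of the paper will establish, and each is proved by the same template: delete a carefully chosen edge (or small set of edges / a low-degree vertex), invoke minimality to colour what remains, then count the colours forbidden at the uncoloured edges. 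A colour is forbidden at $uv$ either because it appears on an adjacent edge, or because using it would close a bichromatic cycle; the cycle-closing constraints are controlled by noting that a bichromatic $(i,j)$-cycle through $uv$ is pinned down by the unique $i$- or $j$-coloured edge at $u$ (or at $v$), so the number of such ``dangerous'' colour pairs is bounded by roughly $d(u)+d(v)$, and hence the number of outright-forbidden colours at $uv$ is at most about $2(d(u)+d(v))$ plus lower-order terms. With $3\Delta+70$ colours available this leaves room whenever $d(u)+d(v)$ is not too close to $2\Delta$, which is exactly where the structural restrictions bite.

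Concretely, the key steps in order would be: (1) set up the minimal counterexample and record the trivial reductions ($\delta\ge 2$, no pendant structures); (2) prove a sequence of lemmas of the form ``$G$ has no vertex $v$ with $d(v)=k$ all of whose neighbours have degree $\le \ell$'' and ``$G$ has no edge $uv$ with $d(u)+d(v)$ small'', for an explicitly chosen threshold, by the delete-recolour-count argument just sketched (here the constant $70$ is what gives the slack for the counting, and where one must be slightly careful is that after deleting one edge a \emph{second} uncoloured edge may have fewer forbidden colours, so the edges should be coloured in a good order, or two edges removed and coloured greedily in sequence); (3) having forbidden all these ``dense'' configurations, assign to each vertex the charge $d(v)-8$ so that $\sum_{v} (d(v)-8) = 2e(G)-8v(G) \le -2 < 0$ by the hypothesis $e(G)\le 4v(G)-1$; (4) run a discharging procedure in which high-degree vertices send charge to adjacent low-degree vertices (e.g.\ every $2$-vertex pulls $3$ from each neighbour, with graduated rules for $3$- and other small-degree vertices) and check, using the structural lemmas from step (2), that every vertex ends with nonnegative charge; (5) conclude $0 \le \sum_v \mathrm{ch}'(v) = \sum_v \mathrm{ch}(v) < 0$, a contradiction, so no counterexample exists.

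I expect the main obstacle to be calibrating step (2) and step (4) so that they are mutually consistent: the discharging rules dictate exactly which local configurations must be excluded, and each such exclusion must actually be provable by the colour-counting argument with only $3\Delta+70$ colours. In particular, handling vertices of degree $2$ and $3$ is delicate, because an edge $uv$ with $d(v)=2$ can have its partner edge blocked by up to $2\Delta - O(1)$ colours through adjacency at the high-degree endpoint, so one must colour such edges last and exploit that the two edges at a $2$-vertex share structure; chains of $2$-vertices (``threads'') need the usual separate treatment, colouring the whole thread by a Vizing-chain / alternating argument. A secondary technical point is that we are proving \emph{choosability}, not just colourability, so every counting bound must be stated in terms of list sizes and the ``number of forbidden colours'' rather than a fixed palette — but since all the forbidding arguments above are already cardinality arguments, this costs nothing extra beyond bookkeeping. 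Once the configuration list and the discharging weights are pinned down, steps (1), (3) and (5) are routine, and the slack built into the additive constant $70$ is precisely the cushion that absorbs the lower-order terms (the ``$+6$'' and ``$+29$'' phenomena in the cited theorems) across all cases.
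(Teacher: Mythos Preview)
Your overall strategy---minimal counterexample, reducible configurations, charge $d(v)-8$, discharging---is exactly the paper's. The main point you miss is that the coefficient $3$ in $3\Delta+70$ is chosen precisely so that vertices of degree at most $3$ are \emph{trivially} reducible: if $d(v)\le 3$ with neighbours $x,y,z$, delete $vx$, colour the rest, and then $L(vx)\setminus(C(x)\cup C(y)\cup C(z))$ is nonempty since $|C(x)\cup C(y)\cup C(z)|\le 3\Delta-1$; any colour outside all three neighbourhoods cannot create a bichromatic cycle through $v$. So the paper gets $\delta(G)\ge 4$ in one line, and all your worries about $2$-vertices, threads, and Vizing-chain arguments evaporate.

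With $\delta\ge 4$ in hand, the paper's configuration list is rather different from what you sketch: it is not about edges with small degree-sum, but rather (i) a $k$-vertex ($4\le k\le 7$) with too many $19^-$-neighbours, (ii) a $k$-vertex ($20\le k\le 31$) with too many $7^-$-neighbours, and (iii) a vertex with at least $d(v)-7$ neighbours of degree $\le 7$ one of which has degree exactly $4$. The discharging is correspondingly simple: $20^+$-vertices send $1$ to each $4$-neighbour and $\tfrac34$ to each neighbour of degree $5,6,7$. The colour-counting for each configuration uses the device you describe (the set $W_H(v,u)$ of neighbours $w$ of $v$ with $c(vw)\in C(u)$, of size at most $d(u)-1$), together with the explicit degree cap $19$ or $7$ on the relevant neighbours; the constant $70$ is what makes the arithmetic close in the worst case. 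Your proposal would work in principle, but by not exploiting $\delta\ge 4$ you are setting yourself up for substantially more case analysis than is needed.
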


As an immediately corollary of Theorems \ref{edge} and \ref{4-linear}, we have the following result.

\begin{cor}
Let $G$ be a ($1,\lambda$)-embedded graph with $1\leq \lambda\leq 2$. Then $\chi'_c(G)\leq 3\Delta(G)+70$.
\end{cor}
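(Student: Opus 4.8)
The plan is to verify that a ($1,\lambda$)-embedded graph with $1\le\lambda\le 2$ meets the hypothesis of \autoref{4-linear} and then simply invoke that theorem. The entire task thus reduces to checking the single inequality $e(G')\le 4v(G')-1$ for every subgraph $G'\subseteq G$, after which the conclusion $\chi'_{c}(G)\le 3\Delta(G)+70$ is immediate.

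First I would observe that the class of ($1,\lambda$)-embedded graphs is closed under taking subgraphs. Fixing a proper embedding of $G$ on a surface of Euler characteristic $\lambda$ and then deleting the vertices and edges not belonging to $G'$ yields a drawing of $G'$ on the \emph{same} surface in which every remaining edge is still crossed by at most one other edge. Hence each $G'\subseteq G$ is itself a ($1,\lambda$)-embedded graph, so \autoref{edge} applies to it verbatim.

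Next, since all graphs here are simple, any $G'$ containing a cycle has girth at least $3$, so I may take $g=3$ in \autoref{edge}. Because $\frac{2g-2}{g-2}=4$ at $g=3$, this gives $e(G')\le 4(v(G')-\lambda)=4v(G')-4\lambda$. As $1\le\lambda$, we have $-4\lambda\le -4\le -1$, whence $e(G')\le 4v(G')-1$. The only remaining case is when $G'$ is acyclic, for which the girth is undefined; but then $e(G')\le v(G')-1\le 4v(G')-1$ holds trivially. Consequently $e(G')\le 4v(G')-1$ for \emph{every} subgraph, and \autoref{4-linear} delivers $\chi'_{c}(G)\le 3\Delta(G)+70$.

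I do not expect a genuine obstacle in this deduction: it is a direct composition of the two cited theorems, and the bound $4v(G')-4\lambda$ comfortably beats the required $4v(G')-1$ with room to spare for both $\lambda=1$ and $\lambda=2$. The only points demanding a line of care are confirming the subgraph-closure of the embedding property (so that \autoref{edge} is available for each $G'$, not merely for $G$ itself) and disposing of the forest case separately, so that substituting $g=3$ into \autoref{edge} is always legitimate.
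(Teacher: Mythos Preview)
Your proposal is correct and follows exactly the approach the paper intends: the corollary is stated there as an immediate consequence of Theorems~\ref{edge} and~\ref{4-linear}, and you have supplied precisely the routine verification (subgraph closure of the embedding property, the $g=3$ substitution, and the forest case) that makes this deduction go through.
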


Before proving Theorem \ref{4-linear}, we first show an useful structural lemma.

\begin{lem}\label{configurations}
Let $G$ be a graph such that $e(G)\leq 4v(G)-1$ and $\delta(G)\geq 4$, Then at least one of the following
configurations occurs in $G$:\\
\noindent \rm ($\mathcal {C}$1) a $4$-vertex adjacent to a $19^-$-vertex;\\
\noindent \rm ($\mathcal {C}$2) a $5$-vertex adjacent to two $19^-$-vertices;\\
\noindent \rm ($\mathcal {C}$3) a $6$-vertex adjacent to four $19^-$-vertices;\\
\noindent \rm ($\mathcal {C}$4) a $7$-vertex adjacent to six $19^-$-vertices;\\
\noindent \rm ($\mathcal {C}$5) a vertex $v$ such that $20\leq d(v)\leq 22$ and at least $d(v)-3$ of its neighbors are $7^-$-vertices;\\
\noindent \rm ($\mathcal {C}$6) a vertex $v$ such that $23\leq d(v)\leq 25$ and at least $d(v)-2$ of its neighbors are $7^-$-vertices;\\
\noindent \rm ($\mathcal {C}$7) a vertex $v$ such that $26\leq d(v)\leq 28$ and at least $d(v)-1$ of its neighbors are $7^-$-vertices;\\
\noindent \rm ($\mathcal {C}$8) a vertex $v$ such that $29\leq d(v)\leq 31$ and all its neighbors are $7^-$-vertices;\\
\noindent \rm ($\mathcal {C}$9) a vertex $v$ such that at least $d(v)-7$ of its neighbors are $7^-$-vertices and at least one of them is of degree $4$.
\end{lem}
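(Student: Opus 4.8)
The plan is to argue by contradiction using a discharging argument. Assume $G$ is a counterexample: $e(G)\le 4v(G)-1$, $\delta(G)\ge 4$, yet none of the configurations $(\mathcal{C}1)$--$(\mathcal{C}9)$ occurs. From $e(G)\le 4v(G)-1<4v(G)$ we get $\sum_{v\in V(G)}(d(v)-8)<0$, so assigning to each vertex $v$ the initial charge $\mu(v)=d(v)-8$ gives a negative total charge. I will then design local discharging rules that move charge from high-degree vertices to low-degree ones (the $7^-$-vertices, which start with negative charge) and show that after redistribution every vertex has nonnegative charge $\mu^*(v)\ge 0$, contradicting the conservation of total charge. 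First I would classify vertices: call a vertex \emph{small} if it has degree $4\le d\le 7$, \emph{medium} if $8\le d\le 19$, and \emph{large} if $d\ge 20$; the thresholds $19$, $20$--$22$, $23$--$25$, etc. in the configuration list are calibrated precisely to make the discharging inequalities tight.

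The key steps, in order: (1) Observe that the absence of $(\mathcal{C}1)$--$(\mathcal{C}4)$ means every small vertex of degree $d\in\{4,5,6,7\}$ has at most a bounded number of small-or-medium ($19^-$) neighbors — namely $0,1,3,5$ respectively — hence at least $d-1, d-2, d-3, d-5$ (roughly) of its neighbors are $20^+$-vertices from which it can collect charge. (2) Observe that the absence of $(\mathcal{C}5)$--$(\mathcal{C}9)$ bounds how many $7^-$-neighbors a large vertex may have: a vertex of degree in $[20,22]$ has at most $d-4$ such neighbors, one in $[23,25]$ at most $d-3$, one in $[26,28]$ at most $d-2$, one in $[29,31]$ at most $d-1$, and — crucially via $(\mathcal{C}9)$ — any large vertex has at most $d-8$ neighbors that are $7^-$ \emph{unless} none of its $7^-$-neighbors has degree exactly $4$, and in all cases at most $d-8$ of them can be $4$-vertices once $(\mathcal{C}9)$ is excluded in full generality. (3) Set the discharging rule: every large vertex $v$ sends charge $\alpha$ to each small neighbor (with possibly a larger amount, say $\alpha'$, to $4$-neighbors), choosing $\alpha,\alpha'$ so that a small vertex ends with charge $\ge 0$ and a large vertex retains charge $\ge 0$. (4) Check $\mu^*(v)\ge 0$ for each type: medium vertices ($8\le d\le 19$) neither send nor receive and have $\mu^*(v)=d-8\ge 0$ automatically; small vertices gain enough from their many large neighbors; large vertices lose at most (number of small neighbors)$\times\alpha'$, which by the bounds from step (2) is at most $d-8$.

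The main obstacle will be step (3)–(4): finding a single discharging amount (or a small schedule of amounts depending on the recipient's degree) that simultaneously works for \emph{all} nine excluded configurations, i.e. verifying that the worst case for a large vertex of each degree band $[20,22],[23,25],[26,28],[29,31]$ and the separate $(\mathcal{C}9)$ constraint all yield $\mu^*\ge 0$. This is a finite but delicate system of linear inequalities; the thresholds $70$ in Theorem \ref{4-linear} and the specific numbers $19$, $7$, and the band boundaries in the lemma are exactly what make it close. A secondary subtlety is handling a large vertex that has many $7^-$-neighbors but \emph{no} $4$-neighbor versus one that has a $4$-neighbor: configuration $(\mathcal{C}9)$ is the catch-all that lets us assume a large vertex adjacent to a $4$-vertex has at most $d-8$ small neighbors total, so that even the most generous transfer to the $4$-vertex leaves it nonnegative. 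Once the rules are fixed, the per-vertex verification is routine arithmetic.
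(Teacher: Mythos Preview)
Your proposal is correct and follows essentially the same discharging argument as the paper: the authors likewise set $w(v)=d(v)-8$, call $20^+$-vertices ``big'' and $7^-$-vertices ``small'', and use exactly the two-tier rule you anticipate, with $\alpha'=1$ sent to each adjacent $4$-vertex and $\alpha=\tfrac{3}{4}$ sent to each adjacent $5$-, $6$-, or $7$-vertex. With these values the per-vertex checks go through just as you outline (in particular, a $4$-vertex receives $4\cdot 1$, a $5$-vertex at least $4\cdot\tfrac34$, a $6$-vertex at least $3\cdot\tfrac34$, a $7$-vertex at least $2\cdot\tfrac34$; and for a big vertex the split into ``has a $4$-neighbor'' versus ``has none'' is handled exactly via $(\mathcal{C}9)$ as you describe).
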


\begin{proof}
Suppose, to the contrary, that none of the nine configurations occurs in $G$. We assign to each vertex $v$ a charge $w(v)=d(v)-8$, then $\sum_{v\in V(G)}w(v)=\sum_{v\in V(G)}(d(v)-8)\leq -2$. In the following, we will reassign a new charge denoted by $w'(x)$ to each $x\in V(G)$ according to some discharging rules. Since our rules only move charges around, and do not affect the sum, we have
\begin{equation}\label{21}
\sum_{v\in V(G)}w'(v)=\sum_{v\in V(G)}w(v)\leq -2.
\end{equation}
We next show that $w'(v)\geq 0$ for each $v\in V(G)$, which leads to a desired contradiction. We say a vertex big (resp.\,small) if it is a $20^+$-vertex (resp.\,$7^-$-vertex). The discharging rules are defined as follows.

(\R1) Each big vertex gives $1$ to each adjacent $4$-vertex.\\
\indent(\R2) Each big vertex gives $\frac{3}{4}$ to each adjacent vertex of degree between 5 and 7.

Let $v$ be a $4$-vertex. Since (\C1) does not occur, $v$ is adjacent to four big vertices. So $v$ totally receives $4$ by
(\R1). This implies that $w'(v)=w(v)+4=d(v)-4=0$. Similarly, we can also prove the nonnegativity of $w'(v)$ if $v$ is a $k$-vertex where $5\leq k\leq 7$.
Let $v$ be a $k$-vertex where $8\leq k\leq 19$. Since $v$ is not involved in the discharging rules, $w'(v)=w(v)=d(v)-8\geq
0$.
Let $v$ be a $k$-vertex where $20\leq k\leq 22$. If $v$ is adjacent to a $4$-vertex, then $v$ is adjacent to at most $d(v)-8$ small vertices since (\C9) does not occur. Since $v$ sends each small vertex at most $1$ By (\R1) and (\R2), $w'(v)\geq w(v)-(d(v)-8)=0$. If $v$ is adjacent to no $4$-vertices, then $v$ sends each small vertex $\frac{3}{4}$ by (\R2). Since (\C5) does not occur either, $v$ is adjacent at most $d(v)-4$ small vertices. So $w'(v)\geq
w(v)-\frac{3}{4}(d(v)-4)=\frac{1}{4}(d(v)-20)\geq 0$. By similar arguments as above, we can also respectively show the nonnegativity of $w'(v)$
if $v$ is a $k$-vertex where $k\geq 23$.
\end{proof}

\noindent \emph{\textbf{Proof of Theorem} $\ref{4-linear}$}. Let $K$ stands for $3\Delta(G)+70$. We prove the theorem by contradiction. Let $G$ be a counterexample to the theorem with the number of edges as small as possible. So there exists a list assignment $L$ of $K$ colors such that $G$ is not acyclicly edge-$L$-choosable. For each coloring $c$ of $G$, we define $c(uv)$ to be the color of edge $uv$ and set $C(u)=\{c(uv)|uv\in E(G)\}$ for each vertex $u$. For $W\subseteq V(G)$, set $C(W)=\bigcup_{w\in W}C(w)$. If $uv\in E(G)$, we let $W_G(v,u)$ stands for the set of neighbors $w$ of $v$ in $G$ such that $c(vw)\in C(u)$. Now, we first prove that $\delta(G)\geq 4$.

Suppose that there is a $3$-vertex $v\in V(G)$. Denote the three neighbors of $v$ by $x$, $y$ and $z$. Then by the minimality of $G$, the graph $H=G-ux$ is acyclicly edge-$L$-choosable. Let $c$ be an acyclic edge coloring of $H$. We can extend $c$ to $uv$ by defining a list of available colors for $uv$ as follows: $$A(uv)=L(uv)\backslash (C(x)\cup C(y)\cup C(z)).$$ Since $|C(x)|\leq \Delta(G)-1$, $|C(y)|\leq \Delta(G)$ and $|C(z)|\leq \Delta(G)$, we have $|A(uv)|\geq K-3\Delta(G)+1>0$. So we can color $uv$ by a color in $A(uv)\subseteq L(uv)$, a contradiction. Similarly, one can also prove the absences of 1-vertices and 2-vertices in $G$. Hence $\delta(G)\geq 4$. Then by Lemma \ref{configurations}, $G$
contains at least one of the configurations (\C1)-(\C9). In the following, we only show that if one of the configurations (\C4), (\C5) and (\C9) appears, then we would get a contradiction. That is because the proofs are similar and easier for another six cases.

\noindent \textbf{Configuration} (\C4): Suppose that there is a $7$-vertex $v$ who is adjacent to six $19^-$-vertices, say $x_1,x_2,\cdots,x_6$. Denote another one neighbor of $v$ by $x_7$. Then by the minimality of $G$, the graph $H=G-vx_7$ is acyclicly edge-$L$-choosable. Let $c$ be an acyclic edge coloring of $H$. Suppose $c(vx_j)\not\in C(x_7)$ for some $1\leq j\leq 6$. Then we can extend $c$ to $vx_7$ by defining a list of available colors for $vx_7$ as follows: $$A(vx_7)=L(vx_7)\backslash \bigcup_{1\leq i\neq j\leq 7} C(x_i).$$
Since $|C(x_i)|\leq \min\{19,\Delta(G)\}$ for every $1\leq i\leq 6$ and $|C(x_7)|\leq \Delta(G)-1$, we have $|A(vx_7)|\geq K-\min\{\Delta(G)+94,6\Delta(G)-1\}=\max\{2\Delta(G)-24,-3\Delta(G)+69\}>0$. So we can color $vx_7$ by a color in
$A(vx_7)\subseteq L(vx_7)$, a contradiction. Thus we shall assume that $c(vx_j)\in C(x_7)$ for every $1\leq j\leq 6$. This implies that $|\bigcup_{1\leq i\leq 7} C(x_i)|\leq \Delta(G)-1+6\times \min\{19,\Delta(G)\}-6=\min\{\Delta(G)+107,7\Delta(G)-7\}$. Now we extend $c$ to $vx_7$ by defining a list of available colors for $vx_7$ as follows: $$A(vx_7)=L(vx_7)\backslash \bigcup_{1\leq i\leq 7} C(x_i).$$ Note that $|A(vx_7)|\geq K-\min\{\Delta(G)+107,7\Delta(G)-7\}=\max\{2\Delta(G)-37,-4\Delta(G)+77\}>0$. So we can again color $vx_7$ by a color in
$A(vx_7)\subseteq L(vx_7)$, also a contradiction.

\noindent \textbf{Configuration} (\C5): If there is a vertex $v$ such that $20\leq d(v)\leq 22$ and at least $d(v)-3$ of its neighbors are $7^-$-vertices. Without loss of generality, we assume that $d(v)=22$ and that $v$ have nineteen $7^-$-neighbors. Denote another three neighbors of $v$ by $x,y$ and $z$. Choose one $7^-$-neighbor, say $u$, of $v$. Without loss of generality, we assume that $d(u)=7$. Then by the minimality of $G$, the graph $H=G-uv$ is acyclicly edge-$L$-choosable. Let $c$ be an acyclic edge coloring of $H$. Then we can extend $c$ to $uv$ by defining a list of available colors for $uv$ as follows: $$A(uv)=L(uv)\backslash \{C(u)\cup C(v)\cup C(x)\cup C(y)\cup C(z)\cup C(W_H(v,u)\}.$$ Since $c$ is an acyclic (and thus it is proper), $|W_H(v,u)|\leq d(u)-1=6$. Since $|C(x)|\leq \Delta(G)$, $|C(y)|\leq \Delta(G)$, $|C(z)|\leq \Delta(G)$ and $|C(w)|\leq 7$ for each $w\in W_H(v,u)$, we have $|A(uv)|\leq K-(3\Delta(G)+6+6\times 6+21-9)>0$. So we can color $uv$ by a color
in $A(uv)\subseteq L(uv)$, a contradiction.

\noindent \textbf{Configuration} (\C9) If there is a vertex $v$ such that at least $d(v)-7$ of its neighbors are $7^-$-vertices and at least one of them is of degree $4$, say $u$. Denote another three neighbors of $u$ by $x,y$ and $z$. Let $C_1=\{c(vw)|w\in N_H(v)\ and\ d_H(w)>7\}$ and $C_2=\{c(vw)|w\in N_H(v)\ and\ d_H(w)\leq7\}$. Then $|C_1|\leq 7$. By the minimality of $G$, the graph $H=G-uv$ is acyclicly edge-$L$-choosable. Let $c$ be an acyclic edge coloring of $H$. Suppose $C(u)\cap C_1\not=\emptyset$. Without loss of generality, we assume that $c(ux)\in C_1$. Now we erase the color of the edge $ux$ from $c$ and recolor it from the list defined as follows: $$A(ux)=L(ux)\backslash \{C(x)\cup C(y)\cup C(z)\cup C_1\}.$$ Since $|C(x)|\leq \Delta(G)$, $|C(y)|\leq \Delta(G)$, $|C(z)|\leq \Delta(G)$ and $|C_1|\leq 7$, we have $|A(ux)|\geq K-(3\Delta(G)+7)>0$. Note that $A(ux)$ is just a sub-list of the original list given at the beginning of the proof and the new color of $ux$ preserves the acyclicity of the coloring of $H$. So we can assume that $C(u)\cap C_1=\emptyset$. In this case, we can extend $c$ to the edge $uv$ by defining a list of available colors for $uv$ as follows: $$A(uv)=L(uv)\backslash \{C(u)\cup C_1\cup C_2\cup C(W_H(v,u))\}.$$ Since $C(u)\cap C_1=\emptyset$, we have $c(vw)\in C_2$ for each $w\in W_H(v,u)$ and thus $|C(W_H(v,u))|\leq 7d_H(u)=21$. Since $|C_1\cup C_2|=d_H(v)\leq \Delta(G)-1$ and $|C(u)|=3$, we have $|A(uv)|\geq K-(\Delta(G)+23)>0$. So we can color $uv$ by a color in
$A(uv)\subseteq L(uv)$. This contradiction completes the proof of Theorem \ref{4-linear}. $\hfill\square$

\end{document}